\IfFileExists{siamart251216.cls}{%
  \documentclass[review,hidelinks,onefignum,onetabnum]{siamart251216}%
}{%
  \documentclass[review,hidelinks,onefignum,onetabnum]{siamart251216_preview}%
}

\usepackage{enumitem}
\usepackage{mathtools}
\makeatletter
\@ifundefined{conjecture}{\newsiamthm{conjecture}{Conjecture}}{}
\@ifundefined{problem}{\newsiamthm{problem}{Problem}}{}
\@ifundefined{remark}{\newsiamremark{remark}{Remark}}{}
\makeatother

\newcommand{\cT}{\mathcal T}
\newcommand{\cC}{\mathcal C}
\newcommand{\cQ}{\mathcal Q}

\newcommand{\ceil}[1]{\left\lceil #1\right\rceil}

\providecommand{\email}[1]{\href{mailto:#1}{#1}}

\title{An Explicit $O(r\log r)$ Threshold for Attaining the Semple--Steel Bound with $r$-State Characters\thanks{Submitted to the editors June 5, 2026. \funding{No external funding is declared in this draft.}}}
\author{Peng Li\thanks{School of Maths and Statistics, Hanjiang Normal University, Shiyan 442000, China (\email{lipeng1@hjnu.edu.cn}).}
\and Yangjing Long\thanks{School of Mathematics, Central China Normal University, Wuhan 430079, China (\email{Yangjing@ccnu.edu.cn}). Corresponding author.}}
\headers{An $O(r\log r)$ threshold}{P. Li and Y. Long}

\begin{document}
\maketitle

\begin{abstract}
Let $d_r(n)$ be the maximum, over all binary phylogenetic trees with $n$ leaves, of the minimum number of $r$-state characters required to define the tree.  Semple and Steel proved that $d_r(n)\geq\lceil(n-3)/(r-1)\rceil$, and Bordewich and Semple proved that equality holds for each fixed $r$ and all sufficiently large $n$.  We study the corresponding threshold $n_r$, the least $N$ for which equality holds for every $n\geq N$.  The Bordewich--Semple construction yields an explicit polynomial upper bound of order $O(r^5)$ for this threshold.  We prove the near-linear estimate
\[
 3r+1\leq n_r\leq \ceil{64(r-1)\log_2(r+1)}+3\qquad(r\geq4).
\]
The proof constructs, for every binary phylogenetic tree with $m=n-3$ internal edges, a linked quartet certificate whose conflict graph has maximum degree at most $16\lceil\log_2(m+2)\rceil+4$.  Equitable coloring then packs the certificate into exactly $\lceil m/(r-1)\rceil$ $r$-state characters once $m\geq\lceil64(r-1)\log_2(r+1)\rceil$.  We also include the lower bound $n_r\geq3r+1$, obtained from the snowflake obstruction, and state the natural conjecture that this lower bound is the exact threshold for all $r\geq4$.  The conjectural endpoint is consistent with the known small-state thresholds: $n_4=13$ and $n_5=16$, while the cases $r=2,3$ are also explicitly classified.
\end{abstract}

\begin{keywords}
phylogenetic tree, convex character, $r$-state character, quartet certificate, equitable coloring, Semple--Steel lower bound
\end{keywords}

\begin{MSCcodes}
05C05, 05C15, 05C70, 92D15
\end{MSCcodes}

\section{Introduction}

The reconstruction of tree-like evolutionary histories from discrete data is a basic problem in phylogenetics.  Classical reconstruction theory studies when a collection of distances, splits, quartets, or characters determines a unique tree.  Buneman's split-equivalence viewpoint and rigid-circuit characterization, Gavril's chordal-subtree theorem, and Steel's work on qualitative characters form part of the graph-theoretic background for perfect phylogeny and compatibility \cite{Buneman1971,Buneman1974,Gavril1974,Steel1992}.  Later developments include partial partitions and tree definition, chordal graph embeddings, and algorithmic extensions of the chordal approach to multi-state perfect phylogeny \cite{SempleSteelPartial2002,ParraScheffler1997,GyselGusfield2010,LamGusfieldSridhar2011}.  General background on phylogenetic trees and characters can be found in the monograph of Semple and Steel \cite{SempleSteelBook2003}.  In the character-based setting, a character on a taxon set $X$ is a partition of $X$ into states.  A character is convex, or homoplasy-free, on a phylogenetic tree if each state induces a connected subtree and the state subtrees are pairwise disjoint.  A collection of characters defines a tree when the tree is the unique phylogenetic tree on which all characters in the collection are convex.  This notion is closely related to perfect phylogeny, compatibility, quartet decisiveness, and the tree-identification problem studied by Bordewich--Huber--Semple and Huber--Moulton--Steel \cite{BordewichHuberSemple2005,HuberMoultonSteel2005}.

Throughout the paper, a phylogenetic $X$-tree is a tree whose leaf set is the finite set $X$ and whose internal vertices have degree at least three.  Binary means that every internal vertex has degree three.  An $r$-state character is a character using at most $r$ states.  Semple and Steel \cite{SempleSteel2002} proved the following fundamental lower bound: every collection of $r$-state characters defining a binary phylogenetic tree with $n$ leaves has size at least
\begin{equation}\label{eq:SS-lower}
  \ceil{\frac{n-3}{r-1}} .
\end{equation}
The bound is sharp in a qualitative eventual sense.  Bordewich and Semple \cite{BordewichSemple2015} proved that, for each fixed $r$, every sufficiently large binary phylogenetic tree with $n$ leaves is defined by exactly the number of $r$-state characters in \eqref{eq:SS-lower}.  Thus the natural quantitative problem is to ask how large ``sufficiently large'' must be as a function of $r$.  For small state bounds, exact threshold information is known only in special cases: Bordewich and Semple proved $n_4=13$, and a recent preprint of Long and Wang reports $n_3=8$ and $n_5=16$ \cite{BordewichSemple2015,LongWang2025}.

The small-state results suggest that the threshold may in fact be linear with the best possible leading constant.  The conjectural value for $r\geq4$ is $3r+1$, and one expects the optimal character sets to be obtainable explicitly.  This agrees with $n_4=13$ and with the reported value $n_5=16$; the case $r=3$ is also linear and has an explicit small-state classification \cite{BordewichSemple2015,LongWang2025}.

For $r\geq2$ and $n\geq3$, define
\[
 d_r(n)=\max_{\cT:\ |X|=n}\min\{ |\cC|:
 \cC\text{ is a collection of }r\text{-state characters defining }\cT\},
\]
where the maximum ranges over all binary phylogenetic $X$-trees with $|X|=n$.  The eventual-sharpness theorem of Bordewich and Semple implies that the threshold
\[
 n_r=\min\left\{N:\ d_r(n)=\ceil{\frac{n-3}{r-1}}\text{ for every }n\geq N\right\}
\]
is well-defined.  Very little has been known about the growth of $n_r$ with $r$ beyond these small cases and the qualitative finiteness theorem.

The Bordewich--Semple proof is constructive, and one can extract from their long-path reduction an explicit polynomial threshold of order $O(r^5)$.  Thus, before the present work, the known quantitative control on $n_r$ was polynomial but still far from the linear lower-bound scale.  The purpose of this paper is to replace that long-path bookkeeping by a direct certificate-packing method and obtain a near-linear bound.

Our main result is the following.

\begin{theorem}\label{thm:main}
For every integer $r\geq4$,
\[
 3r+1\leq n_r\leq \ceil{64(r-1)\log_2(r+1)}+3.
\]
In particular, $n_r=O(r\log r)$.
\end{theorem}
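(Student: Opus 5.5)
The theorem has two halves, and I will prove them separately. The upper bound follows the strategy announced in the abstract: quartet certificates plus equitable coloring. The lower bound uses the snowflake obstruction.

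\emph{Upper bound.} Fix a binary phylogenetic $X$-tree $\cT$ with $m=n-3$ internal edges. For each internal edge $e$ I will choose a quartet $q_e=ab|cd$ that distinguishes $e$. Here $a,b$ lie on one side of $e$, and $c,d$ lie on the other side in the two distinct branches at the far endpoints. The collection $\{q_e\}$ should form a \emph{linked certificate}: a family of quartets whose closure, under the standard dyadic inference rules, is the full quartet set of $\cT$. The standard way to get this is to require that each $q_e$ uses leaves reached by following specified paths from the endpoints of $e$, so that adjacent edges share leaves.

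A set $S$ of such quartets can be realized by a single $r$-state character when $|S|\le r-1$ and the quartets in $S$ are pairwise non-conflicting. Concretely, we take the convex character whose states are the components of $\cT$ minus the edges of $S$, merged appropriately. Two quartets conflict when their leaf choices interfere, for instance when one's chosen leaf falls in a merged state of the other.

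The key combinatorial step is to choose the leaves so that every quartet conflicts with at most $\Delta=16\lceil\log_2(m+2)\rceil+4$ others. My first attempt will be a centroid (heavy-path) decomposition of $\cT$. Each edge chooses its representative leaves by descending into the \emph{lighter} subtree repeatedly. Then any fixed leaf is used as a representative by only $O(\log m)$ edges, since each such edge sits at a halving step along the leaf's path. Summing over the four leaves of a quartet, and over the constant number of ways two quartets can interfere, gives the $16\log_2$ constant.

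Next I apply the Hajnal--Szemer\'edi theorem. The conflict graph on $m$ vertices with maximum degree $\Delta$ has an equitable coloring with $k$ colors for every $k\ge\Delta+1$. I take $k=\lceil m/(r-1)\rceil$; the color classes then have size at most $r-1$ and are independent, so each class yields one $r$-state character. These $k$ characters together define $\cT$ because the certificate is linked. The condition $k\ge\Delta+1$ holds when $m/(r-1)\ge16\log_2(m+2)+5$. A routine calculation, using monotonicity of $m/\log m$, shows this holds for all $m\ge\lceil64(r-1)\log_2(r+1)\rceil$ when $r\ge4$. Adding $3$ converts $m$ to $n$. Combining with the Semple--Steel bound \eqref{eq:SS-lower} gives $d_r(n)=\lceil(n-3)/(r-1)\rceil$ for all such $n$.

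\emph{Lower bound.} It suffices to exhibit $n=3r$ with $d_r(3r)>\lceil(3r-3)/(r-1)\rceil=3$. I take the snowflake: a central vertex whose three branches each carry a caterpillar-like structure, arranged symmetrically with $3r$ leaves. Suppose three $r$-state characters define it. Counting shows they must together distinguish all $3r-3$ internal edges with exactly $r-1$ edges each, so every character is tight, meaning all its states are "used". One then shows that the three edges incident to the central vertex cannot all be distinguished: any tight character distinguishing a central edge forces a state-count overflow on one of the pendant cherries. This is a case analysis on which character handles which central edge, exploiting the symmetry.

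The main obstacle is the degree bound in the certificate construction. Making the quartets simultaneously linked (so the closure recovers $\cT$) and low-conflict (so each leaf is reused only logarithmically) is delicate, because linkage wants nearby edges to share leaves while low conflict wants them spread out. I would first try to resolve this by letting linkage flow only along heavy paths, with light-subtree descent supplying the remaining leaves.
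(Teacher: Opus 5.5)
Your packing step matches the paper: equitable colouring of a conflict graph, one character $\chi_{B_i}$ per colour class, and Semple--Steel for optimality. But the part that carries the upper bound, a linked certificate of logarithmic conflict degree, is not supplied, and you flag it yourself as unresolved. Two ideas are missing.

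First, the right notion of conflict. $q_e$ survives in $\chi_B$ exactly when no other edge of $B$ lies on the same-side paths $P(a_e,b_e)\cup P(c_e,d_e)$. So you must bound the length of these paths and the number of them through each edge. For a general certificate such a conflict need not involve a shared leaf, so leaf reuse does not control it. Your constant $16$ is fitted rather than derived.

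Second, the mechanism that makes linkedness automatic. Fix a transition system: assign to each directed internal branch $(v,w)$ an onward branch at $w$, and let $g(v,w)$ be the leaf reached by following these choices. Then adjacent $q_{uv},q_{vw}$ share $g(v,u)$, $g(v,w)$ and the guide leaf of the third branch at $v$. The tension you perceive then disappears. Every branch on a guide path carries the same guide leaf, so $f\in S(e)$ forces $q_e$ and $q_f$ to share a leaf. Bounded leaf reuse now bounds both path lengths and loads.

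The paper chooses random cyclic orders at the degree-three vertices of $H(\cT)$. Each branch then has at most one predecessor, and a union bound over at most $N^2$ directed paths gives all guide paths length at most $2\lceil\log_2(N+1)\rceil$. Your light-descent idea can be turned into such a system: downward branches continue into the lighter child, and upward branches turn into the sibling. That would give logarithmic width. Your suggested remedy of routing linkage along heavy paths does not obviously work. Linkedness is required for every adjacent pair, not only consecutive heavy-path edges. And if shared leaves are propagated along heavy paths, same-side paths become as long as those paths, which is $\Theta(m)$ on a caterpillar.

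Your lower-bound argument rests on a false claim. Take the $12$-leaf snowflake: centre $c$, neighbours $x_1,x_2,x_3$, each $x_i$ adjacent to $y_{i1},y_{i2}$, and each $y_{ij}$ carrying a cherry $Y_{ij}$. Put $B_1=\{cx_1,x_2y_{21},x_3y_{31}\}$, $B_2=\{cx_2,x_3y_{32},x_1y_{11}\}$ and $B_3=\{cx_3,x_1y_{12},x_2y_{22}\}$. Every component of $\cT-B_i$ contains leaves, so the $\chi_{B_i}$ are tight $4$-state characters. Together they distinguish all nine internal edges, the three central ones included, with no state count overflowing. Yet they do not define $\cT$: all three are convex on the snowflake whose cherries are re-paired as $\{Y_{11},Y_{21}\}$, $\{Y_{12},Y_{31}\}$, $\{Y_{22},Y_{32}\}$. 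So the obstruction is invisible to counting and distinguishing arguments. You have to show that every admissible triple of characters is also convex on some other tree. That is exactly the Huber--Linz--Moulton--Semple theorem the paper cites. The paper then embeds a snowflake into a $3r$-leaf tree to get $d_r(3r)\geq 4>3$.
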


The lower bound is the snowflake obstruction.  Huber, Linz, Moulton, and Semple \cite{HuberLinzMoultonSemple2024} recently characterized the binary phylogenetic trees defined by at most three characters; equivalently, the only obstruction is an internal snowflake.  Embedding this obstruction gives $n_r\geq3r+1$ for all $r\geq4$.  The upper bound is the main contribution of the paper.  It is obtained by turning a linked system of quartets into an equitable packing problem on a conflict graph.  The new ingredient is a short linked quartet-certificate construction with logarithmic conflict degree.

The proof has three main ingredients.
\begin{enumerate}[label=\textup{(\roman*)},leftmargin=2.2em]
\item A linked edge-quartet certificate defines a binary tree once it contains one quartet distinguishing each internal edge.  This uses the linked-system framework developed from the four-character construction of Huber, Moulton, and Steel \cite{HuberMoultonSteel2005} and subsequent efficient quartet-representation work such as \cite{DavidsonLawhornRusinkoWeber2016}.
\item If the conflict graph of such a certificate has maximum degree at most $\Delta$, then the certificate can be packed into exactly $\lceil m/(r-1)\rceil$ $r$-state characters whenever $\lceil m/(r-1)\rceil\geq\Delta+1$, where $m=n-3$ is the number of internal edges.  The packing step is an application of the Hajnal--Szemer\'edi equitable coloring theorem \cite{HajnalSzemeredi1970}.
\item Every binary phylogenetic tree with $m$ internal edges has a linked edge-quartet certificate with conflict degree at most $16\lceil\log_2(m+2)\rceil+4$.  We construct this certificate by choosing a short rotational transition system on the internal tree.
\end{enumerate}
The explicit constant is obtained from the following elementary domination estimate:
\[
 m\geq \ceil{64(r-1)\log_2(r+1)}
 \quad\Longrightarrow\quad
 \ceil{\frac{m}{r-1}}\geq16\ceil{\log_2(m+2)}+5.
\]

The paper is organized as follows.  Section~\ref{sec:preliminaries} fixes notation and recalls quartet certificates.  Section~\ref{sec:packing} proves the conflict-graph packing criterion.  Section~\ref{sec:certificate} constructs the logarithmic linked certificate.  Section~\ref{sec:upper} proves the explicit $O(r\log r)$ upper bound.  Section~\ref{sec:lower} proves the universal lower bound $n_r\geq3r+1$.  Section~\ref{sec:discussion} discusses the remaining linear-threshold problem.

\section{Preliminaries}\label{sec:preliminaries}

Let $\cT$ be a binary phylogenetic $X$-tree.  We write $E^0(\cT)$ for the set of internal edges, i.e., edges whose two endpoints are internal vertices.  If $|X|=n$, then
\[
 |E^0(\cT)|=n-3.
\]
We write
\[
 m=|E^0(\cT)|.
\]
The internal tree of $\cT$ is the tree $H(\cT)$ whose vertex set is the set of internal vertices of $\cT$ and whose edge set is $E^0(\cT)$.  Thus $|E(H(\cT))|=m$ and $|V(H(\cT))|=m+1$.

If $B\subseteq E^0(\cT)$, the character displayed by $B$, denoted $\chi_B$, is the character whose state classes are the sets of leaves contained in the connected components of $\cT-B$.  It has at most $|B|+1$ states.

A quartet $ab\mid cd$ is displayed by $\cT$ if the path from $a$ to $b$ is edge-disjoint from the path from $c$ to $d$.  Equivalently, the minimal subtree on $\{a,b,c,d\}$ has the split $ab\mid cd$.  An internal edge $e$ is distinguished by $ab\mid cd$ if $e$ lies on the path joining the two cherries of the induced quartet, i.e., if deleting $e$ separates $\{a,b\}$ from $\{c,d\}$.

We shall use the following standard form of the linked-system criterion for quartets; see, for example, \cite{HuberMoultonSteel2005,DavidsonLawhornRusinkoWeber2016}.

\begin{definition}\label{def:linked-certificate}
A set
\[
 \cQ=\{q_e:e\in E^0(\cT)\}
\]
is a linked edge-quartet certificate for $\cT$ if each $q_e$ is displayed by $\cT$ and distinguishes $e$, and if adjacent edges in the internal tree have linked witnesses in the following sense: whenever $e$ and $f$ are adjacent in $H(\cT)$, the quartets $q_e$ and $q_f$ share three labels and distinguish $e$ and $f$, respectively, in the corresponding five-leaf subtree.
\end{definition}

\begin{theorem}\label{thm:linked-definitive}
If $\cQ$ is a linked edge-quartet certificate for a binary phylogenetic tree $\cT$, then $\cQ$ defines $\cT$; equivalently, every binary phylogenetic $X$-tree displaying all quartets in $\cQ$ is equal to $\cT$.
\end{theorem}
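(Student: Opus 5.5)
Fix a binary phylogenetic $X$-tree $\cT'$ displaying every quartet of $\cQ$. The strategy is to show directly that $\cT'$ has every split of $\cT$. For each $e\in E^0(\cT)$ let $A_e\mid B_e$ be the split of $\cT$ induced by $e$; the goal is to show $A_e\mid B_e$ is a split of $\cT'$. Since both trees are binary on the same $n$ leaves, each has exactly $n-3$ nontrivial splits. So once all $n-3$ splits of $\cT$ appear in $\cT'$, the split systems coincide, and the splits-equivalence theorem (Buneman) gives $\cT=\cT'$.

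I propose to prove a stronger, local statement by induction on the internal tree $H(\cT)$. For each edge $e$, and each side of $e$, consider the other two edges incident with that endpoint of $e$; each leads to a side subtree. The local claim is that $\cT'$ displays every quartet $ab\mid cd$ that $\cT$ displays and that is ``generated at $e$''. By this I mean $a,b$ lie in different side subtrees on one side of $e$ and $c,d$ in different side subtrees on the other. The base case is given by the quartet $q_e$ itself. I would then propagate along the internal tree.

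Suppose $e,f$ are adjacent in $H(\cT)$, sharing the vertex $v$, and suppose $q_e$ and $q_f$ share three labels. The linkage condition says that the five leaves of $q_e\cup q_f$ induce in $\cT$ the caterpillar $x_1x_2\mid x_3\mid x_4x_5$, with $e$ and $f$ as its two internal edges. A standard quartet-closure rule then applies: $ab\mid cd$ together with $ab\mid ce$ forces the caterpillar, and dyadic-type closure on a binary tree holds. Using it, the two linked quartets imply that $\cT'$ restricted to those five leaves is this same caterpillar. Hence $\cT'$ displays all five quartets of the caterpillar.

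The real work is to turn ``one witness per edge'' into ``all generated quartets''. I would handle this by a rooted induction. Root $H(\cT)$ at a leaf-edge and process the edges in BFS order. The point is that every leaf-to-leaf combination generated at $e$ can be reached by a chain of linked witnesses from the edges towards the relevant leaves. Replacing a label by another label in the same side subtree is justified by the caterpillar closure applied along that chain; this is exactly the Huber--Moulton--Steel replacement argument. The main obstacle is this replacement step. The definition only links adjacent edges pairwise; it does not provide witnesses deep inside each side subtree. To cover that, I would run an inner induction on the distance from $e$ to the leaf being substituted. At each step I would use the linked quartet of the next edge on the path to swap one label at a time, always keeping a displayed quartet that distinguishes $e$.

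Once every generated quartet at $e$ is displayed by $\cT'$, pick any $a,a'\in A_e$ and $b,b'\in B_e$. Then $aa'\mid bb'$ is displayed by $\cT'$, either directly or via a generated quartet and the closure rules. A set of quartets $aa'\mid bb'$ over all such choices forces $A_e\mid B_e$ to be a split of $\cT'$. Counting the splits as above then finishes the proof.
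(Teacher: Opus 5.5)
The paper gives no proof of this statement; it quotes it from Huber--Moulton--Steel and Davidson et al. Your argument therefore has to stand on its own. Your outer frame is fine: show every split of $\cT$ is a split of $\cT'$, then count. The final assembly from generated quartets is also fine. The problems are in the middle. First, the closure rule you invoke is false: $ab\mid cd$ and $ab\mid ce$ are both displayed by each of $ab\mid c\mid de$, $ab\mid d\mid ce$ and $ab\mid e\mid cd$. The rule that forces a caterpillar is $\{ab\mid cd,\ ac\mid de\}\Rightarrow ab\mid c\mid de$. This is not cosmetic. Suppose ``distinguish $e$ and $f$, respectively'' in Definition~\ref{def:linked-certificate} is read in the paper's weak sense, where $e$ merely lies on the internal path. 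Then the non-forcing pattern is allowed and the statement is false. For $\cT=12\mid 3\mid 45$, the pair $q_e=12\mid 45$, $q_f=13\mid 45$ meets the definition literally, yet it is also displayed by $13\mid 2\mid 45$ and $23\mid 1\mid 45$. You must use the exclusive reading: in the five-leaf caterpillar, $q_e$ distinguishes only the edge through $e$, and $q_f$ only the edge through $f$. That reading gives the pattern $ab\mid cd$, $ac\mid de$. It also implies that every $q_e$ has its four leaves in the four distinct subtrees at $e$. Otherwise the internal path of $q_e$ contains an adjacent edge $f$, and $q_e$ would distinguish both internal edges of the $(e,f)$ caterpillar. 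Your base case ``given by $q_e$ itself'' silently assumes this.

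Second, and more seriously, the step you call the real work is not carried out, and the mechanism you describe does not run as stated. Moving a label of the working quartet at $e=uv$ one edge past $f=vw$ is fine, because $q_e$ and $q_f$ are linked. At the next edge $g$, however, $q_g$ is linked to $q_f$, not to your working quartet, and the two share only two labels, so no five-leaf rule applies. To make that swap you must first extract from the $(f,g)$ caterpillar a quartet with a long internal path, namely the near pair of $q_f$ against the far pair of $q_g$, and then chain closures. The auxiliary quartets change at every depth, and you never state the invariant that makes this induction run. Likewise, the chains into the other three subtrees at $e$ are anchored at the original labels of $q_e$. After one label has moved, the next swap again lacks a common triple, and you give no way to combine swaps. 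Appealing to ``the Huber--Moulton--Steel replacement argument'' just re-cites the result.

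A shorter route, under the exclusive reading, avoids replacement altogether. Grow a connected subtree $H'$ of $H(\cT)$ one edge at a time, and prove $\cT'|L_{H'}=\cT|L_{H'}$, where $L_{H'}=\bigcup_{g\in H'}L(q_g)$. Suppose $f=vw$ is added next to $e=uv\in H'$. Following linked pairs along paths of $H'$ toward $v$ shows that the only leaf of $L_{H'}$ beyond $f$ is the leaf $b$ that $q_e$ uses there. So $q_f=\gamma c\mid bb'$, with $\gamma,c,b\in L(q_e)$, adds exactly one new leaf $b'$. In the common restriction, the branch toward $b$ at the median of $\gamma,c,b$ is just the pendant edge of $b$. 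Displaying $q_f$ therefore forces $\{b,b'\}$ to be a cherry in both trees. When $H'=H(\cT)$ one has $L_{H'}=X$, which finishes the proof.
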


For a quartet $q_e=a_eb_e\mid c_ed_e$, define its same-side support
\[
 S(e)=E^0\bigl(P_{\cT}(a_e,b_e)\cup P_{\cT}(c_e,d_e)\bigr),
\]
where $P_{\cT}(x,y)$ is the path from $x$ to $y$ in $\cT$.  Thus $S(e)$ contains the internal edges on the two same-side paths of the quartet.  The edge $e$ itself is not in $S(e)$.

\begin{definition}\label{def:conflict}
Let $\cQ=\{q_e:e\in E^0(\cT)\}$ be a linked edge-quartet certificate.  The conflict graph $\Gamma(\cQ)$ has vertex set $E^0(\cT)$, with distinct vertices $e$ and $f$ adjacent when either $f\in S(e)$ or $e\in S(f)$.  Define
\[
 \sigma(\cQ)=\max_{e\in E^0(\cT)} |S(e)|
\]
and
\[
 \lambda(\cQ)=\max_{f\in E^0(\cT)} |\{e\in E^0(\cT): f\in S(e)\}|.
\]
The routing width of the certificate is
\[
 \rho(\cQ)=\sigma(\cQ)+\lambda(\cQ).
\]
Then $\Delta(\Gamma(\cQ))\leq\rho(\cQ)$.
\end{definition}

\section{Packing certificates into characters}\label{sec:packing}

We first record the packing step that turns a low-conflict quartet certificate into characters.

\begin{theorem}\label{thm:routing-packing}
Let $r\geq2$, put $q=r-1$, and let $\cT$ be a binary phylogenetic tree with $m=|E^0(\cT)|$.  Suppose that $\cT$ has a linked edge-quartet certificate $\cQ$ with routing width $\rho(\cQ)$.  If
\[
  k=\ceil{\frac{m}{q}}\geq \rho(\cQ)+1,
\]
then $\cT$ is defined by exactly $k$ $r$-state characters.
\end{theorem}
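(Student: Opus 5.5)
The plan is to colour the conflict graph $\Gamma(\cQ)$ equitably with $k$ colours and let each colour class $B$ supply the character $\chi_B$. Since $\Delta(\Gamma(\cQ))\le\rho(\cQ)\le k-1$, the Hajnal--Szemer\'edi theorem gives a proper colouring of $\Gamma(\cQ)$ with exactly $k$ colour classes $B_1,\dots,B_k$ whose sizes differ by at most one. The classes partition $E^0(\cT)$, which has $m$ edges, so each class has size at most $\ceil{m/k}$. From $k=\ceil{m/q}\ge m/q$ we get $m/k\le q$, and hence $|B_i|\le q=r-1$. Each character $\chi_{B_i}$ therefore has at most $|B_i|+1\le r$ states, so it is an $r$-state character. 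Some classes may be empty; that is harmless, since $\chi_\emptyset$ is the trivial one-state character and is still counted. Alternatively, because $k\le m$ whenever $m\ge1$, Hajnal--Szemer\'edi already forces every class to be nonempty.

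Next I will show that the collection $\cC=\{\chi_{B_1},\dots,\chi_{B_k}\}$ defines $\cT$. Each $\chi_{B_i}$ is convex on $\cT$ by construction, because its states are the components of $\cT-B_i$. Now let $\cT'$ be any phylogenetic $X$-tree on which every $\chi_{B_i}$ is convex. Fix $e\in E^0(\cT)$ with $q_e=a_eb_e\mid c_ed_e$, and let $B$ be the colour class containing $e$.

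Properness of the colouring means no other edge of $B$ is adjacent to $e$ in $\Gamma(\cQ)$, so $B\cap S(e)=\emptyset$. Also $e\notin S(e)$. Hence in $\cT-B$ the path $P(a_e,b_e)$ survives, so $a_e$ and $b_e$ share a state, and likewise $c_e$ and $d_e$ share a state. The edge $e\in B$ separates $\{a_e,b_e\}$ from $\{c_e,d_e\}$, so these two states differ. A convex character on $\cT'$ with $a_e,b_e$ in one state and $c_e,d_e$ in a different state forces $\cT'$ to display $a_eb_e\mid c_ed_e$. Otherwise the two paths in $\cT'$ would share a vertex, and the two state subtrees would intersect.

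So $\cT'$ displays every quartet of $\cQ$. We also need $\cT'$ to be binary, or the argument must be routed through the full version of Theorem~\ref{thm:linked-definitive}. If $\cT'$ is not assumed binary, I would argue directly: the quartets distinguish all $n-3$ internal edges of $\cT$, and displaying them forces $\cT'$ to refine to $\cT$, so $\cT'=\cT$. Standard statements of the linked-system criterion cover this, and I expect this to be the main subtle point. The conclusion is that $\cT'$ equals $\cT$ by Theorem~\ref{thm:linked-definitive}.

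Finally, for the word ``exactly'', the Semple--Steel lower bound \eqref{eq:SS-lower} shows that fewer than $\ceil{(n-3)/(r-1)}=k$ characters cannot define $\cT$. Combined with the $k$ characters just constructed, $\cT$ is defined by exactly $k$ $r$-state characters. If the $\chi_{B_i}$ coincide or are trivial, the count of the collection could drop below $k$, which would contradict the lower bound, so this cannot happen.
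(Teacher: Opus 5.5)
Your proposal is correct and follows the paper's proof essentially step for step: an equitable proper $k$-colouring of $\Gamma(\cQ)$ via Hajnal--Szemer\'edi gives classes of size at most $\lceil m/k\rceil\le q$, independence in $\Gamma(\cQ)$ gives $B_i\cap S(e)=\emptyset$ so that $q_e$ is displayed by $\chi_{B_i}$, and then Theorem~\ref{thm:linked-definitive} yields definability while the Semple--Steel bound yields optimality. The paper leaves implicit the extra points you verify: that convexity on a competing tree $\cT'$ forces $\cT'$ to display each $q_e$, the non-binary $\cT'$ case, and distinctness of the $k$ characters via the lower bound. Your treatment of these points is sound.
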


\begin{proof}
Since $\Delta(\Gamma(\cQ))\leq\rho(\cQ)$, the Hajnal--Szemer\'edi equitable coloring theorem \cite{HajnalSzemeredi1970} gives an equitable proper $k$-coloring of $\Gamma(\cQ)$.  Let
\[
 E^0(\cT)=B_1\cup\cdots\cup B_k
\]
be the corresponding color classes.  Equitability and $k=\lceil m/q\rceil$ imply
\[
 |B_i|\leq\ceil{\frac{m}{k}}\leq q
\]
for every $i$.

For each $i$, take the character $\chi_{B_i}$ displayed by deleting the edges of $B_i$.  Since $|B_i|\leq q$, this character has at most $q+1=r$ states.  Now let $e\in B_i$.  Because $B_i$ is an independent set of the conflict graph, no edge of $B_i-\{e\}$ lies in $S(e)$.  Hence the two same-side paths of $q_e$ survive in the corresponding components of $\cT-B_i$, while $e$ separates the two sides.  Thus
\[
 q_e\in Q(\chi_{B_i}).
\]
Consequently all quartets in the linked certificate $\cQ$ are displayed by the characters $\chi_{B_1},\ldots,\chi_{B_k}$.  By Theorem~\ref{thm:linked-definitive}, these characters define $\cT$.

The Semple--Steel lower bound \eqref{eq:SS-lower} says that no fewer than $\lceil m/q\rceil=k$ characters can define $\cT$.  Hence the construction is optimal.
\end{proof}

\section{A logarithmic linked quartet certificate}\label{sec:certificate}

We now construct the certificate used in the upper bound.  Let $H=H(\cT)$ be the internal tree of $\cT$.  A directed internal branch is an oriented edge $b=(u,v)$ of $H$.  If $v$ has degree less than three in $H$, then the corresponding internal vertex of $\cT$ is incident with at least one pendant labelled leaf; such a leaf will be used as a terminal leaf for directed paths entering $v$.

At each degree-three vertex of $H$, choose one of the two cyclic orders of its three incident internal edges.  This choice defines a transition rule: if a directed branch enters a degree-three vertex along one internal edge, it leaves along the successor edge in the chosen cyclic order.  At vertices of degree less than three, the transition stops at a pendant labelled leaf.

\begin{lemma}\label{lem:short-transition}
Let $H$ be a finite tree with maximum degree at most three, with $N=|V(H)|$.  There is a choice of cyclic orders at the degree-three vertices such that every transition path has length at most
\[
 L=2\ceil{\log_2(N+1)}
\]
internal edges.
\end{lemma}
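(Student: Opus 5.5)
The plan is to root $H$ at a vertex $z$ of degree less than three (a leaf of $H$ or an isolated vertex, which is trivial). The cyclic orders will be chosen from subtree sizes so that every transition path splits into a short \emph{ascending} part, at most one turn, and a short \emph{descending} part, each controlled by a binary rank. For a vertex $v$ let $s(v)$ be the number of vertices of the subtree $H_v$ rooted at $v$, and put $\operatorname{rk}(v)=\lfloor\log_2 s(v)\rfloor$, so $0\le \operatorname{rk}(v)\le\lceil\log_2(N+1)\rceil-1$. At a degree-three vertex $v$ with parent edge $p$ and children $c_1,c_2$, the two possible cyclic orders are $p\to c_i\to c_{3-i}\to p$ for $i=1,2$.

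\textbf{Step 1 (shape of paths).} The first thing to record is that a transition path of this kind, once it moves away from the root, never moves toward it again. Under the order $p\to c_i\to c_{3-i}\to p$:
\begin{itemize}
\item entering $v$ from $p$ continues down to $c_i$;
\item entering from $c_i$ turns sideways and down into $c_{3-i}$;
\item entering from $c_{3-i}$ continues up to $p$.
\end{itemize}
So every path climbs along edges $c_{3-i}v$, makes at most one sideways turn, and then descends along edges $vc_i$. It stops at a vertex of degree less than three, which is a leaf of $H$ or $z$. Call $c_i$ the \emph{down-child} and $c_{3-i}$ the \emph{up-child} of $v$.

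\textbf{Step 2 (choice of orders).} The rule is designed so that every descending step strictly lowers the rank and every ascending step strictly raises it. Then a descent has at most $\operatorname{rk}(\text{start})+1$ edges, an ascent at most $\lceil\log_2(N+1)\rceil-1$ edges, and together with the turn the total is at most $2\lceil\log_2(N+1)\rceil=L$. If the heavier child $c_1$ has $\operatorname{rk}(c_1)<\operatorname{rk}(v)$, both children drop rank, and I make the lighter child the up-child. The lighter child satisfies $s(c_2)<s(v)/2$, so in this case both kinds of step have the required property, because the rank of the up-child is strictly smaller than that of $v$.

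\textbf{Step 3 (equal-rank heavy chains).} The bad case is $\operatorname{rk}(c_1)=\operatorname{rk}(v)$, since then one of the two steps through $c_1$ preserves the rank. Caterpillars show this cannot be avoided vertex by vertex. The maximal chains $v_0,v_1,\dots$ in which each $v_{j+1}$ is an equal-rank heavy child of $v_j$ are the heavy paths within one rank class. Along each such chain I will alternate the orientation by the parity of $j$: at even positions $c_1$ is the down-child, at odd positions it is the up-child. A transition path then runs along a chain for at most one equal-rank edge before it either turns or leaves through a light child. Leaving through a light child changes the rank strictly: it goes down on descent, and on ascent the path arrives from a light child, which raises the rank. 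The key step is to charge each such equal-rank edge to the rank drop or rise that immediately follows it. This shows that the path still makes at most one step per rank value on each side, plus the turn.

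\textbf{Main obstacle.} Step 3 is where I expect the work to lie. In particular I must check that the alternation never produces two consecutive rank-preserving steps, including across the turn and at the ends of chains. The point needing care is the budget. The two sides together must fit into $2\lceil\log_2(N+1)\rceil$ and not roughly $4\log_2 N$, which requires one shared step per rank level on each side rather than two. If this charging fails, the fallback is to replace $\operatorname{rk}$ by a Strahler-type rank. With a Strahler-type rank the equal-rank chains are exactly the vertices having a child of the same rank, and the alternation argument is cleaner there, because a vertex of rank $k$ has at least $2^{k+1}-1$ vertices in its subtree. This still gives the bound $k\le\log_2(N+1)-1$ on each side.
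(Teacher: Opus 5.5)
Your proposal has a genuine gap, and it is exactly where you flagged it. The heavy-chain alternation of Step 3 does not give one edge per rank level. In fact it produces transition paths longer than $L$.

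\textbf{Why the charging fails.} With your parity, a descent entering the top $v_0$ of an equal-rank chain takes the equal-rank edge to $v_1$. It then leaves through a child whose rank may be only one lower. Charging the equal-rank edge to the following drop therefore puts two edges on one rank level, and this can repeat at every level.

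\textbf{A counterexample.} Let $A_1$ be a single edge with upper end $\alpha_1$. For $k\ge 2$ let $A_k$ consist of a root $\alpha_k$ with a leaf child and a child $\beta_k$. The vertex $\beta_k$ has as children the root $\alpha_{k-1}$ of $A_{k-1}$ and the end of a pendant path with $2^k-1-|A_{k-1}|$ vertices.
\begin{enumerate}[label=\textup{(\roman*)},leftmargin=2.2em]
\item Then $|\beta_k|=2^k$ and $|A_k|=2^k+2$, so $\operatorname{rk}(\alpha_k)=\operatorname{rk}(\beta_k)=k>\operatorname{rk}(\alpha_{k-1})$.
\item Your alternation makes $\beta_k$ the down-child of $\alpha_k$, since $\alpha_k$ tops its chain. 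Your Step 2 makes $\alpha_{k-1}$ the down-child of $\beta_k$.
\item So the descent $\alpha_k\beta_k\alpha_{k-1}\beta_{k-1}\cdots\beta_2\alpha_1$ has $2(k-1)$ edges, stopping at the degree-two vertex $\alpha_1$.
\end{enumerate}
Now join a vertex $p$ to the root of a complete binary tree of depth $5$, to $\alpha_5$, and to a leaf $z$ used as the root. Then $N=99$ and $L=14$, and Step 2 makes $\alpha_5$ the up-child of $p$. Take the path that climbs from a suitable deepest leaf of the binary tree along up-children to $p$ (6 edges), turns into $\alpha_5$ (1 edge), and descends (8 edges). It has $15>L$ edges. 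Replacing depth $5$ by $a-1$ gives length $3a-3$ against $L=2a+2$, i.e.\ about $3\log_2 N$.

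\textbf{The suggested repairs do not help.}
\begin{enumerate}[label=\textup{(\roman*)},leftmargin=2.2em]
\item Reversing the parity only moves the doubled edges to the ascending side.
\item Your claim of at most one equal-rank edge per chain also fails at the top of an ascent: $v_{j+1}\to v_j\to v_{j-1}$ with $j$ odd, turning at $v_{j-1}$.
\item A Strahler-type rank does not fix this. The same mechanism (an equal-rank edge followed by a drop of exactly one level, at every level) occurs in subtrees with only $O(2^k)$ vertices.
\end{enumerate}

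\textbf{How the paper proceeds.} The paper uses no deterministic rule. It chooses the cyclic orders independently and uniformly at random. A fixed directed simple path with $\ell$ edges is followed only if the orders at its $\ell-1$ distinct interior vertices (necessarily of degree three) are all the prescribed ones. This has probability $2^{-(\ell-1)}$. Take $\ell=L+1$ and use $2^{L}\ge(N+1)^2>N^2$. A union bound over the fewer than $N^2$ directed paths of that length then shows that, for some choice, no transition path has an initial segment of $L+1$ edges.

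So the factor $2$ in $L$ comes from the quadratic number of endpoint pairs, not from an ascent/descent split. Your scheme naturally lands near $3\log_2 N$. That would not support the constants $16\lceil\log_2(m+2)\rceil+4$ and $64$ used later. You would need either a genuinely different deterministic rule or simply this union bound.

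A minor point: transitions also stop at degree-two vertices of $H$, not only at leaves and at $z$. This only shortens paths, so it does no harm.
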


\begin{proof}
Choose independently and uniformly one of the two cyclic orders at every degree-three vertex.  Fix a directed simple path of length $\ell$ in $H$.  For the transition system to follow this path, the cyclic order at each of the first $\ell-1$ internal vertices of the path is prescribed.  Hence the probability that this particular path is followed is at most $2^{-(\ell-1)}$.

The number of directed simple paths in an $N$-vertex tree is at most $N^2$.  Taking
\[
 \ell=2\ceil{\log_2(N+1)}+1
\]
gives
\[
 N^2\,2^{-(\ell-1)}<1.
\]
Therefore, with positive probability, no directed transition path has length $\ell$ or more.  Thus some choice of cyclic orders has all transition paths of length at most $\ell-1=2\lceil\log_2(N+1)\rceil$.
\end{proof}

Fix such a transition system.  For every directed internal branch $b=(u,v)$, let $g(b)$ be the labelled leaf reached by following the transition path starting with $b$.  Pendant branches are assigned their incident labelled leaf.

For each internal edge $e=uv$ of $\cT$, let $b_1,b_2$ be the two branches at $u$ different from $e$, and let $b_3,b_4$ be the two branches at $v$ different from $e$, allowing pendant branches.  Define
\[
 q_e=g(b_1)g(b_2)\mid g(b_3)g(b_4).
\]
The quartet $q_e$ is displayed by $\cT$ and distinguishes $e$.

\begin{lemma}\label{lem:transition-linked}
The set $\cQ_{\rm tr}=\{q_e:e\in E^0(\cT)\}$ is a linked edge-quartet certificate for $\cT$.
\end{lemma}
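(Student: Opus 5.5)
The plan is to verify the linked condition of Definition~\ref{def:linked-certificate} directly, by comparing the two quartets attached to a pair of adjacent internal edges $e=uv$ and $f=vw$ of $H$. We already know from the construction that each $q_e$ is displayed by $\cT$ and distinguishes $e$. The four labels of $q_e$ are pairwise distinct: the transition path of a branch directed away from $u$ (or from $v$) stays in the component of $\cT$ into which that branch points, and the four branches $b_1,\dots,b_4$ point into pairwise disjoint subtrees. So it remains to show that $q_e$ and $q_f$ share three labels and restrict correctly to a five-leaf subtree.

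The key observation is a consistency property of $g$. If a transition path starts with the branch $(v,w)$ and $w$ has degree three in $H$, then the path continues along a branch $(w,x)$, where $(w,x)$ is the successor of $f$ in the cyclic order at $w$. Hence
\[
 g((v,w))=g((w,x)).
\]
If $w$ has degree less than three in $H$, the path stops at a pendant leaf at $w$, which is itself one of the (pendant) branches at $w$ other than $f$. This requires a fixed convention: when $w$ carries two pendant leaves, the same designated leaf is used as $g((v,w))$. In either case, $g((v,w))$ is the $g$-value of one of the two branches at $w$ different from $f$. Symmetrically, $g((v,u))$ is the $g$-value of one of the two branches at $u$ different from $e$.

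Now let $c$ be the third branch at $v$, directed away from $v$; it may be pendant. Write
\[
 a=g(\text{other branch at }u),\quad a'=g((v,u)),\quad z=g(c),\quad x=g((v,w)),\quad y=g(\text{other branch at }w).
\]
Here "other" means the branch at $u$ (resp.\ $w$) other than $e$ (resp.\ $f$) not already used by the transition. By the consistency property,
\[
 q_e=a\,a'\mid z\,x,\qquad q_f=a'\,z\mid x\,y.
\]
These share the three labels $a',z,x$. The five leaves $a,a',z,x,y$ lie in five distinct branch-subtrees around the path $u$--$v$--$w$, so they induce the caterpillar $(aa')\,|\,z\,|\,(xy)$ with internal edges $e$ and $f$. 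In this five-leaf tree $q_e$ distinguishes $e$ and $q_f$ distinguishes $f$, which is exactly the linked condition.

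The main obstacle is the bookkeeping at vertices of degree less than three in $H$, and possibly at $u$ or $w$ being leaves of $H$ with two pendant leaves. There one must fix the terminal-leaf convention so that the leaf reached by the transition path coincides with one of the pendant labels actually used in the neighbouring quartet. I would state this convention explicitly and check the degree-one and degree-two cases separately. Everything else is the local computation above.
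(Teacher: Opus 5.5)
Your proposal is correct and follows essentially the same route as the paper: the transition rule gives $g((v,w))=g((w,x))$, so $q_e$ and $q_f$ share the labels $g((v,u))$, $g(c)$ and $g((v,w))$, and the five leaves induce the caterpillar on which $q_e$ and $q_f$ distinguish $e$ and $f$. You also check that the four labels of $q_e$ are distinct, and you note that the terminal pendant leaf at a degree-one vertex of $H$ must be fixed once per vertex so that $g$ is constant along transition paths; the paper's proof leaves both points implicit.
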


\begin{proof}
Each quartet $q_e$ uses two leaves from one side of $e$ and two leaves from the other side, so it is displayed by $\cT$ and distinguishes $e$.  It remains to check linkedness.

Let $e=uv$ and $f=vw$ be adjacent internal edges, and let $h$ be the third branch at the common vertex $v$.  The quartet $q_e$ uses the guide leaves $g(v,w)$ and $g(h)$ on the $v$-side of $e$, while $q_f$ uses $g(v,u)$ and $g(h)$ on the $v$-side of $f$.  Moreover, by the transition rule, the guide leaf $g(v,w)$ is one of the two guide leaves used by $q_f$ on the $w$-side, unless the transition stops immediately at a pendant leaf at $w$, in which case the same pendant leaf is also used by $q_f$.  Similarly, $g(v,u)$ is one of the two guide leaves used by $q_e$ on the $u$-side.  Thus $q_e$ and $q_f$ share the three guide leaves $g(h)$, $g(v,w)$, and $g(v,u)$, and they distinguish the adjacent edges $e$ and $f$ in the induced five-leaf subtree.  Hence the system is linked.
\end{proof}

\begin{lemma}\label{lem:transition-width}
Let $m=|E^0(\cT)|$.  The certificate $\cQ_{\rm tr}$ can be chosen so that
\[
 \rho(\cQ_{\rm tr})\leq 16\ceil{\log_2(m+2)}+4.
\]
\end{lemma}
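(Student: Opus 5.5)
We need to bound $\sigma$ and $\lambda$ for the transition certificate. Fix the transition system given by Lemma~\ref{lem:short-transition} applied to $H=H(\cT)$, with $N=m+1$. Every transition path then has at most $L=2\lceil\log_2(m+2)\rceil$ internal edges. The plan is to bound $\sigma(\cQ_{\rm tr})$ and $\lambda(\cQ_{\rm tr})$ separately by roughly $4L+2$ each; adding them gives $8L+4=16\lceil\log_2(m+2)\rceil+4$.

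\textbf{Bounding $\sigma$.} For $e=uv$, the same-side path $P_\cT(g(b_1),g(b_2))$ lies inside the union of the transition paths starting at $b_1$ and at $b_2$, both of which leave $u$. Its internal edges are therefore contained in these two transition paths, giving at most $2L$ internal edges. The same holds on the $v$-side. The only subtlety is a pendant branch $b_i$, which contributes no internal edges. Hence $|S(e)|\leq 4L$, and $\sigma\leq 4L$.

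\textbf{Bounding $\lambda$.} This is the main step. Fix $f\in E^0(\cT)$ and count the edges $e$ with $f\in S(e)$. If $f\in S(e)$, then $f$ lies on a transition path starting at a branch $b$ leaving an endpoint of $e$ and pointing away from $e$. We charge this to the directed branch $b$. Each $b=(u,x)$ arises from at most two edges $e$, namely the two other internal edges at $u$. So it suffices to count directed branches $b$ whose transition path traverses $f$.

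Transition paths are deterministic forward: the successor of a directed edge is unique. Backward, however, each directed edge $(x,y)$ has a unique predecessor at $x$ under the cyclic order, namely the edge entering $x$ from which the rule sends us to $y$. So the set of transition paths through a fixed orientation of $f$ is a single backward chain. Its length is at most $L$, since any backward chain is a forward path read in reverse and forward paths have length at most $L$. Thus at most $L$ starting branches pass through each orientation of $f$, and at most $2L$ in total. Multiplying by the factor $2$ from the charging gives $\lambda\leq 4L$, possibly up to an additive constant. That constant arises because a transition path ending at a degree-$\le2$ vertex reuses a pendant leaf, or because $e$ itself is the first edge of a path. I would absorb these boundary effects into the $+4$ in the statement.

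Adding the two bounds gives $\rho\leq 8L+4$.

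The hardest part is making the backward-uniqueness argument precise. At a degree-three vertex the predecessor is unique. At a vertex of degree two in $H$, however, the transition stops, so chains cannot pass through it. This needs checking against the convention that paths stop at pendant leaves. The issue to settle is whether a path "starting with $b$" counts $b$ itself among its $L$ edges; I would first fix that convention so the factor bookkeeping lands on exactly $16\lceil\log_2(m+2)\rceil+4$.
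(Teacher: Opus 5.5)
Your proposal is correct and follows the paper's proof essentially step for step. The bound $\sigma\le 4L$ holds because the two same-side paths of $q_e$ are covered by the four guide paths. The bound on $\lambda$ comes from the unique-predecessor property of the rotational transition system: at most $L$ starting branches pass through each orientation of $f$, hence $2L$ in total, and then a factor $2$ because each directed branch is a side branch of at most two internal edges.

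The convention you flag is settled by the proof of Lemma~\ref{lem:short-transition}, where $L$ counts the first edge $b$ itself. With that convention your charging already gives $\lambda\le 4L$ with no boundary correction, since pendant edges are not internal and $e\notin S(e)$. So the $+4$, which the paper also adds only as slack, is not actually needed.
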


\begin{proof}
The internal tree $H$ has $m+1$ vertices.  By Lemma~\ref{lem:short-transition}, every guide path has length at most
\[
 L=2\ceil{\log_2(m+2)}.
\]
For a fixed edge $e$, the two same-side paths of $q_e$ are formed from four guide paths, two at each endpoint.  Thus
\[
 |S(e)|\leq4L,
\]
and so $\sigma(\cQ_{\rm tr})\leq4L$.

We next bound the load.  In a rotational transition system, every directed internal branch has at most one predecessor under the transition map.  Therefore, for each orientation of a fixed internal edge $f$, at most $L$ directed branches have guide paths containing that oriented edge.  Taking both orientations gives at most $2L$ such directed branches.  Each directed branch can appear as a side branch for at most two internal edges, so the edge $f$ lies in the same-side support of at most $4L+4$ quartets, where the additive constant accounts for the two endpoints of $f$ and pendant stopping cases.  Hence
\[
 \lambda(\cQ_{\rm tr})\leq4L+4.
\]
Thus
\[
 \rho(\cQ_{\rm tr})=\sigma(\cQ_{\rm tr})+\lambda(\cQ_{\rm tr})\leq8L+4=16\ceil{\log_2(m+2)}+4.
\]
\end{proof}

\section{The upper bound}\label{sec:upper}

It remains to compare the logarithmic conflict bound with $m/(r-1)$.

\begin{lemma}\label{lem:domination}
Let $q\geq3$ and put
\[
 M_q=\ceil{64q\log_2(q+2)}.
\]
Then, for every integer $m\geq M_q$,
\[
 \ceil{\frac{m}{q}}\geq16\ceil{\log_2(m+2)}+5.
\]
\end{lemma}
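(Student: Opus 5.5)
The plan is to compare the two sides through a step-function argument, because a crude continuous estimate is too lossy for small $q$. For instance, at $q=3$ we have $M_3=446$, and then $\lceil 446/3\rceil=149=16\lceil\log_2 448\rceil+5$. So the inequality holds with equality at the start of the range, and one cannot afford to replace $\lceil\log_2(m+2)\rceil$ by $\log_2(m+2)+1$ there.

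\textbf{Step 1 (reduction to jump points).} The left side $\lceil m/q\rceil$ is nondecreasing in $m$. The right side $R(m)=16\lceil\log_2(m+2)\rceil+5$ is constant on each block $2^{j-1}<m+2\leq 2^j$ and increases only when $m+2=2^j+1$. Hence it suffices to verify the inequality at two kinds of points:
\begin{enumerate}[label=\textup{(\alph*)}]
\item the starting point $m=M_q$;
\item every jump point $m=2^j-1\geq M_q$, where the required inequality is $\lceil(2^j-1)/q\rceil\geq16(j+1)+5=16j+21$.
\end{enumerate}

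\textbf{Step 2 (jump points).} Let $j_0$ be the least $j$ with $2^j-1\geq M_q$. At $j_0$ we have $2^{j_0}\geq M_q+1\geq 64q\log_2(q+2)$, so
\[
(2^{j_0}-1)/q\geq 64\log_2(q+2)-1/q .
\]
We also have $j_0\leq\log_2(M_q+1)+1\leq\log_2(64q\log_2(q+2)+2)+1$. It then remains to check
\[
64\log_2(q+2)-1\geq 16\log_2\bigl(64q\log_2(q+2)+2\bigr)+37 .
\]
This is true for large $q$, since the left side is $64\log_2 q+O(1)$ and the right side is $16\log_2 q+O(\log\log q)$. For subsequent $j$ the left side doubles while the right side grows by $16$ only. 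Once $2^{j}/q\geq 16j+22$ holds, it persists, because $2^{j+1}/q\geq 32j+44\geq 16(j+1)+22$.

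\textbf{Step 3 (starting point).} We need $\lceil M_q/q\rceil\geq 16\lceil\log_2(M_q+2)\rceil+5$. Since $M_q/q\geq64\log_2(q+2)$, this reduces to comparing $64\log_2(q+2)$ with $16\log_2(64q\log_2(q+2)+3)+21$. That comparison holds once $(q+2)^4\geq 2^{21/16}\bigl(64q\log_2(q+2)+3\bigr)$, which is true for all $q\geq q^*$ with a small explicit $q^*$ (around $q^*=6$).

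\textbf{Main obstacle.} The hard step is the finitely many small $q$ (say $3\leq q<q^*$), where the continuous estimates fail and one must keep the ceilings exact. For these I would compute $M_q$ directly, then check the starting point and the first jump point by hand, as in the $q=3$ computation above. Step 2's doubling argument then covers all later jump points, once the first jump point is shown to satisfy $2^{j}/q\geq16j+22$; if it only satisfies the weaker needed bound, I would also check the second jump point by hand.
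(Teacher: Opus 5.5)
Your proof is correct in outline, but it is organized differently from the paper's. For $q\ge5$ the paper argues purely analytically. It shows $M_q/q-16\log_2(M_q+2)>20$ via the auxiliary function $H(x)=64x-16\log_2\bigl(64x(2^x-2)+3\bigr)-20$ with $x=\log_2(q+2)$, using a bound on $H'$ plus the evaluation $H(\log_2 7)>0$. It then extends $m/q>16\log_2(m+2)+20$ to all $m\ge M_q$ by monotonicity of $F(t)=t/q-16\log_2(t+2)$, and absorbs both ceilings by integrality. Only for $q=3,4$ does it check the first dyadic block and then the later block starts $\lceil(2^{s-1}-1)/q\rceil\ge16s+5$; as you observed, the start of the range is too tight there for the continuous estimate. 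That check is exactly your jump-point reduction. You use the reduction for every $q$ and replace the monotonicity of $F$ in $m$ by the doubling induction. This is more elementary and keeps the ceiling exact at jump points. The price is a lossier estimate at the first jump point, and hence more cases to check by hand.

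Concretely, bounding $2^{j_0}$ from below and $j_0$ from above separately costs about $16$. As a result, the sufficient inequality in your Step 2 holds only for $q\ge8$: at $q=7$ it reads $201.9\ge204.6$, which is false. Your Step 3 inequality holds for $q\ge5$. So the small range must be $3\le q\le7$, not $q<q^*\approx6$. The checks do pass there: for example $M_5=899$, $M_6=1152$, $M_7=1421$, with $j_0=10,11,11$ and large margins at both the start and the first jump point.

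You also still owe an explicit threshold and a monotonicity-in-$q$ (derivative) argument behind ``true for large $q$''; this is the analogue of the paper's bound on $H'$. Alternatively, apply monotonicity of $y\mapsto(y-1)/q-16\log_2 y$, valid for $y>16q/\ln 2$, at $y=2^{j_0}\ge M_q+1$. This removes the loss and reduces Step 2 to the paper's inequality $H>0$, so it holds from $q=5$ on.
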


\begin{proof}
Set $x=\log_2(q+2)$.  First assume $q\geq5$, so $x\geq\log_2 7$.  We claim that
\begin{equation}\label{eq:H-positive}
 64x>16\log_2\bigl(64x(2^x-2)+3\bigr)+20.
\end{equation}
Let
\[
 H(x)=64x-16\log_2\bigl(64x(2^x-2)+3\bigr)-20.
\]
For $x\geq\log_2 7$, differentiating and bounding the logarithmic derivative gives
\[
 \frac{64(2^x-2)+64x2^x\ln2}{64x(2^x-2)+3}
 \leq
 \frac{1+x\ln2}{x(1-2^{1-x})}<2.
\]
Hence $H'(x)>64-32/\ln2>0$ in this range.  A direct evaluation gives $H(\log_2 7)>0$, proving \eqref{eq:H-positive}.

Since
\[
 M_q+2\leq64qx+3=64x(2^x-2)+3,
\]
we obtain
\[
 \frac{M_q}{q}-16\log_2(M_q+2)>20.
\]
The function
\[
 F(t)=\frac{t}{q}-16\log_2(t+2)
\]
is increasing for $t\geq M_q$, because
\[
 F'(t)=\frac1q-\frac{16}{(t+2)\ln2}>0
\]
there.  Therefore, for all $m\geq M_q$,
\[
 \frac{m}{q}>16\log_2(m+2)+20.
\]
Since
\[
 16\ceil{\log_2(m+2)}+5\leq16\log_2(m+2)+21,
\]
and the left side of the desired inequality is an integer, the result follows for $q\geq5$.

It remains to check $q=3,4$.  If $q=3$, then $M_q=446$.  For $446\leq m\leq510$, one has
\[
 \ceil{m/3}\geq149=16\ceil{\log_2(m+2)}+5.
\]
For later dyadic intervals $2^{s-1}-1\leq m\leq2^s-2$, $s\geq10$, the inequality follows from
\[
 \ceil{\frac{2^{s-1}-1}{3}}\geq16s+5.
\]
If $q=4$, then $M_q=662$.  For $662\leq m\leq1022$, one has
\[
 \ceil{m/4}\geq166>165=16\ceil{\log_2(m+2)}+5.
\]
For $s\geq11$, the interval estimate
\[
 \ceil{\frac{2^{s-1}-1}{4}}\geq16s+5
\]
gives the result.
\end{proof}

\begin{theorem}\label{thm:upper}
For every $r\geq4$,
\[
 n_r\leq \ceil{64(r-1)\log_2(r+1)}+3.
\]
\end{theorem}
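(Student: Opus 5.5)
The plan is to combine the three ingredients already established: the packing criterion (Theorem~\ref{thm:routing-packing}), the logarithmic certificate (Lemmas~\ref{lem:transition-linked} and~\ref{lem:transition-width}), and the domination estimate (Lemma~\ref{lem:domination}), together with the Semple--Steel lower bound \eqref{eq:SS-lower}.

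Fix $r\geq4$, put $q=r-1\geq3$, and let $N_r=\ceil{64q\log_2(q+2)}+3$. Since $q+2=r+1$, this is exactly the quantity in the statement, and $M_q=N_r-3$. By definition of $n_r$ it suffices to show that $d_r(n)=\ceil{(n-3)/q}$ for every $n\geq N_r$.

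The lower inequality $d_r(n)\geq\ceil{(n-3)/q}$ is \eqref{eq:SS-lower}. For the upper inequality, take an arbitrary binary phylogenetic $X$-tree $\cT$ with $|X|=n\geq N_r$, so that $m=n-3\geq M_q$. By Lemma~\ref{lem:transition-linked} and Lemma~\ref{lem:transition-width}, $\cT$ has a linked edge-quartet certificate $\cQ_{\rm tr}$ with $\rho(\cQ_{\rm tr})\leq16\ceil{\log_2(m+2)}+4$. Lemma~\ref{lem:domination} gives
\[
k=\ceil{m/q}\geq16\ceil{\log_2(m+2)}+5\geq\rho(\cQ_{\rm tr})+1.
\]
Theorem~\ref{thm:routing-packing} therefore applies, and $\cT$ is defined by exactly $k$ $r$-state characters. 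Taking the maximum over all trees $\cT$ with $n$ leaves yields $d_r(n)\leq k$, so equality holds for every $n\geq N_r$, and hence $n_r\leq N_r$.

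There is no real obstacle here. The substantive work is already contained in the cited lemmas, and the only points needing care are bookkeeping: checking that $q=r-1\geq3$ satisfies the hypothesis of Lemma~\ref{lem:domination}, that $\log_2(q+2)=\log_2(r+1)$, and that the shift between $m$ and $n$ is exactly $3$.
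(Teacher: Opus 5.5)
Your proposal is correct and follows the paper's proof essentially verbatim: with $q=r-1\geq3$ and $m=n-3\geq\lceil 64q\log_2(q+2)\rceil$, you take the transition certificate of Lemma~\ref{lem:transition-width}, use Lemma~\ref{lem:domination} to get $\lceil m/q\rceil\geq\rho(\cQ_{\rm tr})+1$, and apply Theorem~\ref{thm:routing-packing}. The only cosmetic difference is that you invoke the Semple--Steel bound \eqref{eq:SS-lower} separately to conclude $d_r(n)=\lceil (n-3)/q\rceil$, whereas the paper already builds that lower bound into the word ``exactly'' in Theorem~\ref{thm:routing-packing}.
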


\begin{proof}
Put $q=r-1$.  Let $\cT$ be any binary phylogenetic tree with $m=|E^0(\cT)|$ internal edges and suppose
\[
 m\geq \ceil{64q\log_2(q+2)}.
\]
By Lemma~\ref{lem:transition-width}, choose a linked edge-quartet certificate $\cQ_{\rm tr}$ with
\[
 \rho(\cQ_{\rm tr})\leq16\ceil{\log_2(m+2)}+4.
\]
By Lemma~\ref{lem:domination},
\[
 \ceil{\frac{m}{q}}
 \geq16\ceil{\log_2(m+2)}+5
 \geq \rho(\cQ_{\rm tr})+1.
\]
The routing-width packing criterion, Theorem~\ref{thm:routing-packing}, defines $\cT$ by exactly $\lceil m/q\rceil$ $r$-state characters.  Since $m=n-3$, exact attainment holds for every
\[
 n\geq \ceil{64(r-1)\log_2(r+1)}+3.
\]
Thus $n_r$ is at most this quantity.
\end{proof}

\section{The lower bound}\label{sec:lower}

We now prove the lower bound in Theorem~\ref{thm:main}.

\begin{proposition}\label{prop:lower}
For every $r\geq4$,
\[
 n_r\geq3r+1.
\]
\end{proposition}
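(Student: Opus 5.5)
To prove $n_r\geq 3r+1$ it suffices, by the definition of $n_r$, to exhibit one value $n\geq 3r$ for which $d_r(n)>\lceil (n-3)/(r-1)\rceil$. Then equality fails at some $n\geq 3r$, so the least $N$ with equality for every $n\geq N$ must exceed $3r$.

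We take $n=3r$ and use the snowflake tree. For $n=3r$ we have $m=n-3=3r-3=3(r-1)$, so the Semple--Steel value is $\lceil m/(r-1)\rceil=3$. Our aim is a binary phylogenetic tree on $3r$ leaves that is \emph{not} defined by any three $r$-state characters. By the result of Huber, Linz, Moulton, and Semple \cite{HuberLinzMoultonSemple2024}, a binary tree is defined by at most three characters (with no bound on the number of states) if and only if it contains no internal snowflake. It therefore suffices to find a binary tree with $3r$ leaves containing an internal snowflake, because then no three characters of any size define it. Such a tree has $d_r(3r)\geq 4>3$.

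The snowflake is a central vertex $c$ with three internal edges to vertices $u_1,u_2,u_3$. Each $u_i$ has two further internal edges leading to cherries, giving $9$ internal edges and $12$ leaves, which is exactly the case $r=4$, $n=12$. For general $r\geq 4$ we need $3r$ leaves, so we add $3(r-4)$ further leaves. We do this by subdividing pendant edges at leaves, for example lengthening the cherry arms into caterpillars, so that the central snowflake remains internal. The plan is to check against the precise definition in \cite{HuberLinzMoultonSemple2024} that the enlarged tree still contains an internal snowflake, for instance as a subdivision or as a restriction on a subset of leaves whose snowflake vertices are all internal. Every number $3r\geq 12$ of leaves is reachable this way, since each added leaf increases the leaf count by one.

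The main obstacle is matching the exact form of the snowflake obstruction in \cite{HuberLinzMoultonSemple2024}: whether it is a subtree pattern, a restriction, or an induced configuration, and whether attaching extra leaves preserves it. If extending the obstruction is delicate, the fallback is a direct argument. Three characters defining $\cT$ with $m=3(r-1)$ would have to meet the Semple--Steel bound with equality, and one would expect equality to force every character to have exactly $r$ states and the edge partition $B_1\cup B_2\cup B_3$ to have no wasted edges. Around the central vertex of the snowflake, such tight characters cannot distinguish all nine snowflake edges compatibly, which reduces the general case to the $12$-leaf case already covered by the characterization.
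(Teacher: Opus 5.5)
Your argument is correct and is essentially the paper's proof. You take $n=3r$, observe that $\lceil (n-3)/(r-1)\rceil=3$, and apply the Huber--Linz--Moulton--Semple snowflake characterization to a $3r$-leaf tree obtained from the $12$-leaf snowflake by attaching extra leaves. Your device of subdividing pendant edges only enlarges the internal tree, so it keeps the snowflake as an internal subtree; this is a welcome detail the paper leaves implicit. The ``fallback'' in your last paragraph is not needed, and as written it is only a heuristic, so you should drop it.
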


\begin{proof}
Set $n=3r$.  Then
\[
 \ceil{\frac{n-3}{r-1}}=\ceil{\frac{3r-3}{r-1}}=3.
\]
Huber, Linz, Moulton, and Semple \cite{HuberLinzMoultonSemple2024} proved that a binary phylogenetic tree is defined by at most three characters if and only if it has no internal subtree isomorphic to the snowflake.  Since $3r\geq12$, there exists a binary phylogenetic tree on $3r$ leaves containing the snowflake as an internal subtree: take a twelve-leaf snowflake example and attach the remaining leaves without destroying the internal snowflake.  This tree cannot be defined by three characters, and hence cannot be defined by three $r$-state characters.  Therefore the Semple--Steel lower bound is not attained at $n=3r$.  Hence $n_r>3r$, which is the desired inequality.
\end{proof}

\begin{proof}[Proof of Theorem~\ref{thm:main}]
The lower bound is Proposition~\ref{prop:lower}, and the upper bound is Theorem~\ref{thm:upper}.
\end{proof}

\section{Discussion and remaining problems}\label{sec:discussion}

Theorem~\ref{thm:main} shows that the eventual sharpness threshold of Bordewich and Semple grows at most logarithmically above the universal lower-bound scale.  In the proof, the logarithm enters through the construction of a linked quartet certificate with logarithmic conflict degree.  The subsequent passage from quartets to $r$-state characters is an equitable coloring argument.

The natural next target is the following linear statement.

\begin{conjecture}\label{conj:linear}
For every $r\geq4$,
\[
 n_r=3r+1.
\]
Moreover, whenever $n\geq3r+1$, there is an explicit construction of
\[
 \ceil{\frac{n-3}{r-1}}
\]
$r$-state characters defining any prescribed binary phylogenetic tree with $n$ leaves.
\end{conjecture}

The lower bound in Proposition~\ref{prop:lower} shows that the constant $3$ in Conjecture~\ref{conj:linear} would be best possible.  The conjecture includes an effective form, not only an existence statement.  The cases presently settled for small state bounds are consistent with this prediction: internal splits give the binary case, Bordewich and Semple proved $n_4=13$, and Long and Wang report $n_3=8$ and $n_5=16$ \cite{BordewichSemple2015,LongWang2025}.  Thus the endpoint formula is verified for $r=4,5$, while the $r=3$ threshold is linear but lies below the $3r+1$ expression.

A proof of Conjecture~\ref{conj:linear} would require more than simply shortening the transition paths used above.  On highly branching internal trees, for example complete binary internal cores, deterministic linked-transition certificates naturally have logarithmic length.  Thus a linear theorem is likely to require a different certificate mechanism, such as a non-linked semidyadic block certificate or a bounded-state core forceability argument.

Two questions remain natural.

\begin{problem}
Does every binary phylogenetic tree admit a linked edge-quartet certificate whose conflict graph has bounded maximum degree?  A positive answer would imply $n_r=O(r)$ by Theorem~\ref{thm:routing-packing}.
\end{problem}

\begin{problem}
Does there exist an absolute constant $C$ such that, whenever $m=|E^0(\cT)|\geq C(r-1)+1$, the internal edges of $\cT$ can be partitioned into $\lceil m/(r-1)\rceil$ blocks of size at most $r-1$ whose displayed characters have semidyadic closure equal to $Q(\cT)$?  A positive answer would prove $n_r=O(r)$.
\end{problem}

The present paper leaves these linear problems open, but reduces the general threshold problem from the earlier polynomial $O(r^5)$ window to the explicit near-linear window in Theorem~\ref{thm:main}.

\paragraph{Use of AI tools.}
During the preparation of this manuscript, AI tools were used for language polishing, LaTeX editing assistance, organization of explanatory text, and consistency checks.  The mathematical statements, proofs, computations, conjectures, references, and final manuscript were reviewed and approved by the authors.

\end{document}